\newcommand{\ignore}[1]{}
\newcommand{\floor}[1]{\left \lfloor #1 \right \rfloor}
\newtheorem{thm}{Theorem}
\newtheorem{lemma}{Lemma}
\newtheorem{conj}{Conjecture}
\theoremstyle{definition}
\title{Multi-Switch: a Tool for Finding Potential Edge-Disjoint $1$-factors}
\author{Tyler Seacrest \\ The University of Montana Western \\ 710 S Atlantic St \\ Dillon, MT 59725, USA \\ \texttt{tyler.seacrest@umwestern.edu}}
\date{\today}
\begin{document}

\maketitle

\abstract{Let $n$ be even,  let $\pi = (d_1, \ldots, d_n)$ be a graphic degree sequence, and let $\pi - k = (d_1 - k, \ldots, d_n - k)$ also be graphic.  Kundu proved that $\pi$ has a realization $G$ containing a $k$-factor, or $k$-regular graph.  Another way to state the conclusion of Kundu's theorem is that $\pi$ \emph{potentially} contains a $k$-factor.

Busch, Ferrara, Hartke, Jacobsen, Kaul, and West conjectured that more was true:  $\pi$ potentially contains $k$ edge-disjoint $1$-factors.  Along these lines, they proved $\pi$ would potentially contain edge-disjoint copies of a $(k-2)$-factor and two $1$-factors.  

We follow the methods of Busch et al.\ but introduce a new tool which we call a multi-switch.  Using this new idea, we prove that $\pi$ potentially has edge-disjoint copies of a $(k-4)$-factor and four $1$-factors.  We also prove that $\pi$ potentially has ($\floor{k/2} + 2$) edge-disjoint $1$-factors, but in this case cannot prove the existence of a large regular graph.}

\section{Introduction}
Listing out all the degrees of a graph forms its \emph{degree sequence} or \emph{degree list}.  Conversely, given a list $\pi = (d_1, d_2, \ldots, d_n)$, we say that $\pi$ is \emph{graphic} if it is the degree sequence of a graph $G$, and that $G$ \emph{realizes} $\pi$.  We can also add an integer to $\pi$, where $\pi + k = (d_1 + k, d_2 + k, \ldots, d_n + k)$.   A \emph{$k$-factor} is a $k$-regular subgraph of a graph.  Thus, a $1$-factor is a perfect matching, and a $2$-factor is a spanning collection of disjoint cycles.

Given a sequence $\pi = (d_1, d_2, \ldots, d_n)$, it is said $\pi$ \emph{potentially} has property $P$ if there is at least one realization of $\pi$ that has property $P$.  One beautiful result along these lines, due to Kundu~\cite{Kundu73}, characterizes those degree sequences that potententially have a $k$-factor.
\begin{thm}[Kundu~\cite{Kundu73}]
A graphic sequence $\pi$ potentially has a $k$-factor if and only if $\pi-k$ is graphic.
\end{thm}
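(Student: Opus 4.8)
The plan is to prove the two implications separately; the forward direction is immediate and the converse carries all the weight. (Assume $k \ge 1$, since $k=0$ is trivial.) For the forward direction, suppose $G$ realizes $\pi$ and contains a $k$-factor $F$. Removing the edges of $F$ from $G$ lowers every vertex degree by exactly $k$, so $G - E(F)$ is a realization of $\pi - k$; hence $\pi - k$ is graphic.

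For the converse, work on a fixed labelled vertex set $V = \{v_1, \dots, v_n\}$. Since $\pi$ and $\pi - k$ are graphic, there exist graphs on $V$ realizing them with $\deg(v_i) = d_i$ and $\deg(v_i) = d_i - k$ respectively. Among all pairs $(G, H)$ of such realizations on this common vertex set, choose one maximizing $|E(G) \cap E(H)|$. The key claim is that for this pair $E(H) \subseteq E(G)$. Granting the claim, $G - E(H)$ is a spanning subgraph of the realization $G$ of $\pi$ in which $v_i$ has degree $d_i - (d_i - k) = k$, i.e.\ a $k$-factor, which is exactly what is wanted.

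To prove the claim I would argue by contradiction from maximality. Suppose some edge $xy$ lies in $E(H) \setminus E(G)$. Comparing neighbourhoods at $x$ gives $|N_G(x) \setminus N_H(x)| = |N_H(x) \setminus N_G(x)| + k \ge k \ge 1$, and likewise at $y$; hence $x$ has a $G$-neighbour $a$ with $xa \notin E(H)$ and $y$ has a $G$-neighbour $b$ with $yb \notin E(H)$. In the generic case in which $a, b, x, y$ are distinct and $ab \notin E(G)$, the $2$-switch that replaces the edge pair $\{xa, yb\}$ by $\{xy, ab\}$ preserves the degree sequence of $G$, puts the $H$-edge $xy$ into $E(G) \cap E(H)$, and deletes from $G$ only the edges $xa, yb$, neither of which lies in $E(H)$. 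Thus $|E(G) \cap E(H)|$ strictly increases, contradicting the choice of $(G, H)$.

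The real work, and the main obstacle, lies in the degenerate configurations that can block this switch: it may happen that the only admissible ``extra neighbours'' force $a = b$, or that $ab \in E(G)$ for every admissible choice of $a$ and $b$. Each such situation forces additional adjacency structure on $G$ and $H$ around $x$ and $y$, which one exploits either by first performing an auxiliary $2$-switch to liberate a usable pair $a, b$, or by instead switching inside $H$ so as to delete the edge $xy$ without destroying any other common edge. In every case one again produces a pair of realizations of $\pi$ and $\pi - k$ on $V$ with strictly larger intersection, restoring the contradiction. Carrying out this finite case analysis carefully is precisely the technical heart of Kundu's original argument and completes the proof.
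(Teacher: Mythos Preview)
The paper does not actually prove Kundu's theorem: it only states it and refers the reader to Chen~\cite{Chen88} for a short proof. So there is no ``paper's own proof'' to compare against here.

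As for your proposal on its own merits: the overall strategy is exactly the right one and is essentially Chen's argument --- fix realizations $G$ of $\pi$ and $H$ of $\pi-k$ on a common vertex set maximizing $|E(G)\cap E(H)|$, and show $E(H)\subseteq E(G)$. The forward direction and the setup for the converse are fine. However, you explicitly stop short of the actual content: you write that ``the real work\ldots lies in the degenerate configurations'' and that ``carrying out this finite case analysis carefully is precisely the technical heart'' --- and then do not carry it out. That is not a proof; it is a description of where the proof would be. In particular, the scenario $ab\in E(G)$ for all admissible $a,b$ genuinely requires a further idea (typically a second switch, possibly in $H$ rather than $G$), and waving at ``additional adjacency structure'' does not discharge it. If you want this to stand as a proof you must complete that case analysis; otherwise it remains only an outline pointing at Kundu's or Chen's argument.
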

\noindent See Chen~\cite{Chen88} for a short and elegant proof of Kundu's theorem.

 We say that $\pi$ has \emph{even length} if the length of the list (and hence number of vertices intended in the resulting graph) is even.  Having an even number of vertices is an obvious necessary condition for a graph to contain a $1$-factor.  Busch, Ferrara, Hartke, Jacobson, Kaul, and West~\cite{BuschEtAl12} conjectured that if $\pi$ is of even length, then the $k$-factor from Kundu's theorem can be decomposed into edge-disjoint $1$-factors.   

\begin{conj}[Busch et al.~\cite{BuschEtAl12}]
\label{conj:buschetal}
The sequences $\pi$ and $\pi-k$ are graphic and of even length if and only if $\pi$ potentially has $k$ edge-disjoint $1$-factors.
\end{conj}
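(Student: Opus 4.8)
\medskip
\noindent\textbf{Towards Conjecture~\ref{conj:buschetal}.}
One direction is immediate: if a realization $G$ of $\pi$ contains edge-disjoint $1$-factors $M_1,\dots,M_k$, then $n$ is even (since $M_1$ is a perfect matching) and deleting $M_1\cup\dots\cup M_k$ from $G$ leaves a realization of $\pi-k$, so $\pi-k$ is graphic. The content is the converse, and the plan is to build the $k$ matchings one at a time, moving between realizations of $\pi$ by degree-preserving switches. Phrased as an induction on $k$, I would aim to prove the slightly stronger statement that whenever $\pi$ and $\pi-k$ are graphic and $n$ is even there is a realization $G$ of $\pi$ with a spanning subgraph $H\subseteq G$ realizing $\pi-k$ such that the $k$-regular graph $G\setminus H$ is class $1$ (properly $k$-edge-colourable); its colour classes are then the desired edge-disjoint $1$-factors. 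Two preliminary ingredients would be needed: the fact that graphicness is monotone under subtraction in this range, i.e.\ that for even $n$ both $\pi$ and $\pi-k$ graphic forces $\pi-j$ graphic for every $0\le j\le k$ (so that the hypothesis survives one step of the recursion, for the pair $\pi-1$ and $(\pi-1)-(k-1)=\pi-k$), which I would extract from the Erd\H{o}s--Gallai conditions; and Kundu's theorem, which already hands us \emph{a} realization of $\pi$ carrying a $k$-factor from which to start. The recursion then runs: split one $1$-factor $M$ off the current $k$-factor, leaving a realization of $\pi-1$; re-position by switches so that the $(k-1)$-case hypothesis holds for the new realization; apply induction to get $k-1$ further edge-disjoint $1$-factors; and verify that $M$ can be reinstated alongside them.

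The obstruction to running this naively is exactly what the multi-switch is built to overcome. An ordinary switch exchanges two edges $ab,cd$ for $ac,bd$ and is the standard move connecting realizations of a fixed degree sequence, but it is too coarse here: the switch one needs in order to repair the colour classes of the current candidate factor, or to bring the target copy of $\pi-k$ inside $G$, will in general destroy the $1$-factor $M$ that has already been set aside. A multi-switch should bundle several elementary switches along an alternating structure --- an alternating cycle or path in the symmetric difference of the two realizations being compared --- so as to realize the wanted change to the factor while leaving $M$, and the embedded $\pi-k$, untouched. The core of the argument would thus be a toolbox of such lemmas: a multi-switch that turns a prescribed non-edge of $G$ into an edge while avoiding a fixed matching; one that changes which vertices a partial factor saturates; one that merges two short colour classes. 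With these in hand one peels $1$-factors off repeatedly without undoing earlier work.

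The step I expect to be genuinely hard --- and the reason the full statement is still only a conjecture --- is guaranteeing that a usable $1$-factor is available at every stage: that the current $k$-regular factor really is class $1$, and that the multi-switches witnessing this can be chosen to steer clear of \emph{all} previously committed edges simultaneously. The dangerous inputs are the near-regular sequences, for which $\pi$ and $\pi-k$ can pin the factor down to something like a disjoint union of near-cliques or a near-bipartite graph; there edge-$k$-colourability is delicate (the Petersen graph stands as a reminder that a $k$-regular graph of even order need not $1$-factorize) and the room for switches is cramped, so the colouring argument and the switching argument must be carried out together rather than one after the other. Controlling that interaction is precisely what Busch et al.\ were able to handle only for two $1$-factors at a time, and what the multi-switch pushes to four (or, if one drops the demand for a large regular graph, to $\lfloor k/2\rfloor+2$ edge-disjoint $1$-factors); a single argument uniform in $k$ is what a full proof of the conjecture would require.
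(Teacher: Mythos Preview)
This statement is presented in the paper as an open \emph{conjecture}, not a theorem; the paper offers no proof of it. What the paper actually proves are the partial results Theorem~\ref{thm:4-1-factors} (four edge-disjoint $1$-factors together with a $(k-4)$-factor) and Theorem~\ref{thm:k/2-1-factors} ($\lfloor k/2\rfloor+2$ edge-disjoint $1$-factors). Your write-up is likewise not a proof, and to your credit you say so explicitly: you sketch an inductive scheme, identify the obstruction (the switches needed to set up the next step may destroy the $1$-factors already secured), and correctly locate the missing ingredient as a uniform-in-$k$ control of that interaction. As an essay on why the conjecture is hard and where the multi-switch enters, this is accurate; as a proof, it is not one, and there is no proof in the paper to compare it against.

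One remark on strategy, in case you go further. Your outline is inductive: peel off one $1$-factor, recurse on $(\pi-1,k-1)$, then reinstate. The paper's arguments for the partial results are not organized this way. For Theorem~\ref{thm:4-1-factors} it takes an \emph{extremal} realization (maximizing first the number $i$ of $1$-factors already split off, then the size of a matching in the residual $(k-i)$-factor) and derives a contradiction from $i\le 3$ by a direct local analysis on two fully matched odd cycles, using a single multi-switch or a single two-switch to enlarge the matching. There is no recursion and no need to thread earlier $1$-factors through later switches; the extremal choice does that bookkeeping implicitly. If you want to push beyond four, that extremal framing is likely more tractable than the peel-and-reinsert induction you describe, precisely because it sidesteps the ``protect all previously committed edges'' difficulty you flagged.
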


Using the vocabulary of edge-colorings, we could also state this result as saying the $k$-factor from Kundu's theorem is potentially of class $1$, or is potentially $1$-factorizable.  

Along these lines, they strengthened Kundu's theorem by proving:

\begin{thm}[Busch et al.~\cite{BuschEtAl12}]
\label{thm:busch}
If $\pi$ and $\pi-k$ are graphic and of even length, then $\pi$ has a realization containing edge-disjoint copies of one $(k-2)$-factor and two $1$-factors.
\end{thm}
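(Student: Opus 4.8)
The plan is to start from the realization supplied by Kundu's Theorem and then repair it by a sequence of edge switches until the $k$-factor it contains visibly splits off two edge-disjoint $1$-factors.

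\emph{Setup and reformulation.} By Kundu's Theorem applied to $\pi$ and $\pi-k$, fix a realization $G$ of $\pi$ together with a $k$-factor $K\subseteq G$, and write $G=K\cup J$ where $J:=G-E(K)$ realizes $\pi-k$ and is edge-disjoint from $K$. The key observation is that $K$ contains two edge-disjoint $1$-factors if and only if $K$ has a spanning subgraph $D$ every component of which is an even cycle: the union of two edge-disjoint perfect matchings is precisely a spanning $2$-regular graph with no odd cycle, and conversely each even cycle splits into two perfect matchings, which one assembles over the components (here is where $n$ even is used, so that $1$-factors exist at all). Given such a $D$, the two matchings $M_1,M_2$ of $D$ together with $K-E(D)$, a $(k-2)$-factor, are edge-disjoint subgraphs of $G$ of the required types. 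So it suffices to find, among all pairs $(G,K)$ as above, one whose $k$-factor $K$ admits a spanning even-cycle subgraph.

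\emph{Switching and augmentation.} I would work with $2$-switches, which replace independent edges $ab,cd$ by $ac,bd$ when $ac,bd$ are absent and which preserve all degrees. A $2$-switch performed inside $K$ (with the two new edges also absent from $J$) keeps $K$ a $k$-factor and $G$ a realization of $\pi$; a $2$-switch inside $J$ does the same for $J$; one also needs "transfer" switches that trade an edge of $K$ for an edge of $J$ while preserving the regularity of both — exactly the move Busch et al.\ exploit, and the seed of this paper's multi-switch. Now pick $(G,K)$ and a subgraph $D\subseteq K$ with $\Delta(D)\le 2$ and no odd cycle so as to maximize $|E(D)|$, and subject to that to minimize the number of components. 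If $D$ is not already spanning and $2$-regular, some vertex $v$ has $\deg_D(v)<2$ and is thus an endpoint of a path component of $D$ (possibly a single vertex). I would then run a rotation/alternating-path argument along edges of $K$ out of $v$: either one finds an edge of $K$ that directly enlarges $D$ — by joining two path components, or by closing an odd path into an even cycle — contradicting maximality; or the rotation stalls, exhibiting a Tutte-set--like configuration, and then the degree slack in $K$ (every vertex has degree $k$) together with the freedom to re-route edges of $J$ yields a switch, or a short sequence of switches, producing a pair $(G',K')$ with a strictly better $D$, again a contradiction.

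\emph{Main obstacle.} The delicate point is this last step, specifically the coordination between $K$ and $J$: a switch that would improve $D$ inside $K$ is often blocked because the new edge already lies in $J$, which forces a simultaneous compensating switch in $J$, and one must then check that the degree bookkeeping still balances, that no new odd component is forced into the even-cycle structure, and that a suitable monovariant — essentially the pair $(|E(D)|,-\#\text{components})$, perhaps refined by a further count — strictly increases so that the process terminates. Getting this two-sided bookkeeping to go through is the real work, since the underlying matching theory is classical; and it is precisely this constraint that the multi-switch of the present paper is designed to loosen, which is what ultimately lets the later sections strengthen the conclusion to a $(k-4)$-factor together with four $1$-factors.
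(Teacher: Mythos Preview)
This theorem is not proved in the paper at all: it is quoted as the result of Busch, Ferrara, Hartke, Jacobson, Kaul, and West and attributed to~\cite{BuschEtAl12}, so there is no ``paper's own proof'' to compare against. The paper only indicates, in passing, that Busch et al.\ relied on the Edmonds--Gallai Structure Theorem (see the discussion before Lemma~\ref{lemma:odd}), and the present paper's methods for the stronger Theorem~\ref{thm:4-1-factors} proceed by taking a \emph{maximum matching} in the regular factor, locating the unmatched vertices on disjoint fully-matched odd cycles, and then using switches between two such odd cycles to enlarge the matching. That is rather different in organization from your plan of directly hunting for a spanning even $2$-factor inside $K$ by maximizing $|E(D)|$.

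More importantly, what you have written is not a proof but a strategy, and you say so yourself: your ``Main obstacle'' paragraph names exactly the step you have not carried out. The reformulation (two edge-disjoint $1$-factors in $K$ $\Leftrightarrow$ a spanning union of even cycles in $K$) is correct, and the extremal setup (maximize $|E(D)|$, then minimize the number of components) is a reasonable scaffold. But the entire content of the theorem lives in the switching step you wave at: when the rotation argument ``stalls,'' you assert that the $k$-regularity of $K$ and the freedom to re-route $J$ produce a switch that improves $D$, yet you give no mechanism for this, no case analysis, and no verification that the monovariant actually increases. This is precisely the place where naive attempts fail --- a switch that helps inside $K$ may be blocked by $J$, the compensating switch in $J$ may be blocked by $K$, and so on --- and it is exactly why Busch et al.\ brought in Edmonds--Gallai and why the present paper invents the multi-switch. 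As written, your proposal has a genuine gap at its only nontrivial step.
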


It is important to note a classical theorem by Petersen~\cite{Petersen1891}, which says that for even $k$, any $k$-regular graph can be decomposed into two-factors.

\begin{thm}[Petersen's $2$-factor Theorem~\cite{Petersen1891}]
\label{thm:petersen}
If $k$ is even, then a $k$-regular graph contains $k/2$ edge-disjoint $2$-factors.
\end{thm}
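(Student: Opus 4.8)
The plan is to combine an Eulerian orientation of $G$ with the fact that a regular bipartite graph decomposes into perfect matchings. First I would reduce to the connected case: a $2$-factor of $G$ is the disjoint union of $2$-factors of the connected components of $G$, and an edge-disjoint family of such factors is assembled componentwise, so it suffices to treat a connected $k$-regular graph $G$ with $k$ even. Since $k$ is even, every vertex of $G$ has even degree, so $G$ admits an Eulerian circuit. Traversing this circuit and orienting each edge in the direction of travel produces an orientation $D$ of $G$ in which every vertex $v$ satisfies $\deg^{+}_{D}(v) = \deg^{-}_{D}(v) = k/2$: each passage through $v$ consumes exactly one incoming and one outgoing edge, and every edge is used exactly once.

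Next I would form the bipartite ``split'' graph $H$ with parts $\{v^{+} : v \in V(G)\}$ and $\{v^{-} : v \in V(G)\}$, placing an edge $u^{+}v^{-}$ for each arc $u \to v$ of $D$. Because all in- and out-degrees in $D$ equal $k/2$, the graph $H$ is $(k/2)$-regular and bipartite, so it decomposes into $k/2$ perfect matchings $M_1, \ldots, M_{k/2}$ --- for instance by repeatedly invoking Hall's theorem (a nonempty regular bipartite graph has a perfect matching, and deleting one leaves a regular bipartite graph), or directly from K\"onig's edge-coloring theorem.

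Finally I would pull each $M_i$ back to $G$: let $F_i \subseteq E(G)$ consist of the edges whose $D$-arcs correspond to the edges of $M_i$ in $H$. Since $M_i$ covers $u^{+}$ exactly once and $u^{-}$ exactly once, the vertex $u$ is incident in $F_i$ to exactly one arc leaving $u$ and exactly one arc entering $u$; hence $F_i$ is a spanning $2$-regular subgraph of $G$, i.e., a $2$-factor, and the $F_i$ are pairwise edge-disjoint (in fact they partition $E(G)$) because the $M_i$ are. The only points needing care are the two degree counts --- that the Eulerian orientation balances in- and out-degree at every vertex, and that a perfect matching of $H$ pulls back to a genuinely $2$-regular (not merely degree-at-most-$2$) spanning subgraph --- and both are immediate, so I expect no real obstacle in this classical statement.
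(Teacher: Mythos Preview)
Your argument is the standard and correct proof of Petersen's theorem: Eulerian-orient each connected component, pass to the bipartite split $H$, invoke K\"onig/Hall to decompose the $(k/2)$-regular bipartite $H$ into perfect matchings, and pull each matching back to a $2$-factor of $G$. The degree bookkeeping you flag is indeed the only place requiring care, and you handle it correctly.

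There is nothing to compare against: the paper does not prove Theorem~\ref{thm:petersen}. It is stated as a classical result with a citation to Petersen~\cite{Petersen1891} and then used as a black box in Section~\ref{sec:k/2-1-factors}. Your write-up supplies exactly the classical proof one would expect, so it is a fine addition rather than a deviation from anything in the paper.
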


Hence, the $k$-factor from Kundu's theorem decomposes into $2$-factors without even considering alternate realizations of $\pi$.  However, finding $1$-factors is a bit more challenging.   Indeed -- finding even two $1$-factors in a graph (i.e.\ without degree sequence considerations) is NP-complete~\cite{LevenGalil83}.

In this note, we improve the Busch et al.\ result a bit farther, showing

\begin{thm}
\label{thm:4-1-factors}
If $\pi$ and $\pi-k$ are graphic and of even length, then $\pi$ has a realization containing edge-disjoint copies of one $(k-4)$-factor and four $1$-factors.
\end{thm}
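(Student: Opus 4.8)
The plan is to follow the strategy behind Theorem~\ref{thm:busch} --- peel $1$-factors off a $k$-factor one at a time, using edge-switches to repair whatever stands in the way --- but to replace the ordinary $2$-switch in the repair step by a multi-switch, a coordinated batch of $2$-switches whose greater reach survives the tighter constraints that arise once two $1$-factors have already been removed.

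First I would invoke Theorem~\ref{thm:busch} to obtain a realization $G$ of $\pi$ with edge-disjoint $1$-factors $M_1, M_2$ and a $(k-2)$-factor, and write $F = H \cup M_1 \cup M_2$ for the associated $k$-factor. Observe that $G - F$ is automatically a realization of $\pi - k$; this supplies a stock of non-edges of $G$ to switch into, the one exception being sequences $\pi$ so dense that $G$ is all but complete --- but then $F$ may be taken to consist of $k$ disjoint $1$-factors drawn from a proper edge-coloring of $G$, so the theorem is immediate. In every other case it suffices to produce, after possibly altering $G$, two further edge-disjoint $1$-factors inside $H$; the remainder is then the desired $(k-4)$-factor.

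Set $H_2 = H$. For $j = 2$ and then $j = 3$, suppose we are given a realization $G$ of $\pi$ with edge-disjoint $1$-factors $M_1, \dots, M_j$ and a $(k-j)$-factor $H_j$ satisfying $F = H_j \cup M_1 \cup \dots \cup M_j$, and let us pass to a new such realization in which $M_1, \dots, M_j$ persist and $H_j$ acquires a $1$-factor $M_{j+1}$. If $H_j$ already has a $1$-factor, take it and continue. Otherwise the Berge--Tutte formula gives a set $S$ of vertices with $o(H_j - S) > |S|$; fix two odd components $C, C'$ of $H_j - S$. The multi-switch step is to choose an alternating cycle --- alternating between edges of $H_j$ and non-edges of $G$ --- that meets both $C$ and $C'$, and to flip it. Flipping an alternating cycle preserves all degrees, so $G$ stays a realization of $\pi$ and $H_j$ stays $(k-j)$-regular; the cycle uses no edge of any $M_i$ (these are disjoint from $H_j$, and only non-edges of $G$ are added), so all the $M_i$ survive; and it merges $C$ with $C'$, which --- with routine bookkeeping for any further components the cycle passes through --- reduces $o(H_j - S)$. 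Iterating, and re-choosing $S$ as needed, eventually makes $H_j$ satisfy Tutte's $1$-factor condition, giving $M_{j+1}$. Doing this for $j = 2$ and $j = 3$ yields $M_3$ and $M_4$, completing the proof.

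The step I expect to be the main obstacle is showing that a legal alternating cycle exists whenever $H_j$ has positive deficiency. A length-four alternating cycle --- an ordinary $2$-switch --- can be simultaneously blocked on every side: $C$ or $C'$ may have no usable internal edge, the required ``crossing'' non-edges may already be present in $G$, or the crossing edges may lie in some $M_i$; and the obstruction worsens as $j$ grows, since by the fourth extraction three $1$-factors' worth of edges are barred both as interior edges of the cycle and, dually, as exterior non-edges. Establishing that a longer alternating cycle can always route around these obstructions will lean on $H_j$ being $(k-j)$-regular, so the components of $H_j - S$ cannot all be tiny relative to $k$, on $G - F$ genuinely realizing $\pi - k$, which keeps $G$ from being too complete and guarantees non-edges to thread through, and on a close analysis of the extremal case in which every odd component of $H_j - S$ is a single vertex or a very short path. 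I expect this is also where the value ``four'' is forced: with five or more protected $1$-factors the region reachable by legal moves can contract below what a merging cycle requires, which fits with the abstract's larger count $\floor{k/2} + 2$ being attainable only at the cost of losing control of the leftover regular graph, and with Conjecture~\ref{conj:buschetal} remaining open in general.
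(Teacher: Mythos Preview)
Your proposal is a plan rather than a proof: you explicitly flag the existence of a suitable alternating cycle as ``the main obstacle'' and do not resolve it. That gap is real, and the paper closes it by a quite different mechanism than the Tutte-set component-merging you outline.

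First, your use of ``multi-switch'' does not match the paper's. You describe flipping an alternating cycle of $H_j$-edges and non-edges of $G$; the paper's multi-switch (Lemma~\ref{lemma:multi-1}) is a local device: fix two vertices $u,v$ with $\deg_G(u)\ge\deg_G(v)$, take several length-two $u$--$v$ paths in $K_n$, and swap the two colors on each so that a prescribed white edge at $u$ and a prescribed colored edge at $v$ trade colors while every vertex keeps its degree in every color class. The point is not to thread a long cycle through two components but to recolor a handful of edges incident only to $u$ and $v$.

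Second, the paper never invokes a Tutte set $S$. It takes an extremal realization --- first maximize the number $i$ of edge-disjoint $1$-factors already split off, then among those maximize a matching $M$ in the residual $(k-i)$-factor --- and assumes for contradiction that $i\le 3$. Lemma~\ref{lemma:odd} places two $M$-uncovered vertices on disjoint fully matched odd cycles $C_1,C_2$ in the blue $(k-i)$-factor. On each odd cycle there are three consecutive vertices in monotone degree order; after comparing the two cycles one gets $u_1,u_2\in C_1$ and $v_2,v_3\in C_2$ with $\deg(u_1)\ge\deg(u_2)\ge\deg(v_2)\ge\deg(v_3)$. Now inspect the four cross-edges $e_1=u_1v_2$, $e_2=u_1v_3$, $e_3=u_2v_2$, $e_4=u_2v_3$. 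If any is black or white, a single application of Lemma~\ref{lemma:multi-1} (the degree chain supplies the required inequality) recolors one blue cycle-edge at $u_1$ or $u_2$ into a blue cross-edge, and $M$ extends around both cycles --- contradicting extremality. Otherwise all four cross-edges lie in the at most three existing $1$-factors; by pigeonhole two of them share a $1$-factor, and the matching constraint forces these to be the parallel pair $\{e_1,e_4\}$ or $\{e_2,e_3\}$. An ordinary $2$-switch on that pair with the adjacent blue cycle-edges fuses $C_1$ and $C_2$ into a single even blue cycle, again enlarging $M$.

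The idea you are missing is exactly this four-cross-edge pigeonhole: it, not a general alternating-cycle routing argument, is what pins the value at four. Your hope that a long alternating cycle can always be threaded around the obstructions may or may not be salvageable, but you have not established it, and the paper does not need it.
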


We can also get many more $1$-factors with the same hypothesis, at the expense of not leaving the rest  of the $k$-factor intact.

\begin{thm}
\label{thm:k/2-1-factors}
If $\pi$ and $\pi-k$ are graphic and of even length, then $\pi$ has a realization containing $(\floor{k/2} + 2)$ edge-disjoint $1$-factors.
\end{thm}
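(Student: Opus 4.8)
The plan is to feed the Busch et al.\ strengthening (Theorem~\ref{thm:busch}) and Petersen's theorem (Theorem~\ref{thm:petersen}) into the multi-switch machinery, spending the switches on the one thing those results do not supply on their own: making all the cycles of a $2$-factor have even length.

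\emph{Setup.} Small $k$ is covered by Theorem~\ref{thm:4-1-factors} (for instance $k=4$ gives four $1$-factors directly, and $k=5$ gives five), so assume $k$ is large; we first treat $k$ even, the odd case being the same argument after absorbing one of the two $1$-factors produced below into the remaining factor so that Theorem~\ref{thm:petersen} applies. Applying Theorem~\ref{thm:busch} yields a realization $G$ of $\pi$ with edge-disjoint subgraphs $F_1,F_2$ (each a $1$-factor) and $H$ (a $(k-2)$-factor). Since $k-2$ is even, Theorem~\ref{thm:petersen} writes $H=Q_1\cup\cdots\cup Q_\ell$ as a union of $\ell=(k-2)/2$ edge-disjoint $2$-factors. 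Call a $2$-factor \emph{good} if all of its cycles are even; a good $2$-factor splits into two edge-disjoint $1$-factors. It therefore suffices to make at least $g:=\lceil k/4\rceil$ of the $Q_i$ good: then $G$ contains $F_1,F_2$ together with $2g\ge\floor{k/2}$ further pairwise edge-disjoint $1$-factors, a total of at least $\floor{k/2}+2$. Since $g\le\ell$, a few of the $Q_i$ are surplus and may be used as scratch space below.

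\emph{Making a $2$-factor good.} Because $n$ is even, each $2$-factor contains an even number of odd cycles, so its odd cycles pair off. Two vertex-disjoint odd cycles $C=c_1c_2\cdots c_pc_1$ and $C'=c'_1c'_2\cdots c'_qc'_1$ of a $2$-factor $Q_i$ can be fused into the single even cycle $c_2c_3\cdots c_pc_1c'_2c'_3\cdots c'_qc'_1c_2$ of length $p+q$ by deleting $c_1c_2$ and $c'_1c'_2$ and adding $c_1c'_2$ and $c'_1c_2$; this is a single switch, it preserves $\pi$, it leaves $F_1$, $F_2$, and the other $Q_j$ untouched, and it strictly decreases the number of odd cycles of $Q_i$ — provided both new edges are absent from $G$. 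If every such switch were available we could make every $Q_i$ good and obtain $k$ edge-disjoint $1$-factors, i.e.\ Conjecture~\ref{conj:buschetal} in full.

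\emph{The multi-switch and the counting.} The obstruction is that for a given pair $C,C'$ every choice of edges to delete may force one of the two new edges — say $c_1c'_2$ — to be present already, lying in some other $Q_j$ or in $F_1\cup F_2$. Routing around this is exactly what a multi-switch does: one performs a coordinated family of switches that deletes $c_1c'_2$ from $Q_j$ and repairs the degrees of $c_1$ and $c'_2$ through an auxiliary edge, so that the fusion inside $Q_i$ goes through while the only side effect is a rerouting of a surplus $Q_j$ (destroyed as a regular graph — which is why the conclusion does not promise a large regular subgraph). The remaining content, and the step I expect to be the genuine obstacle, is the combinatorial bookkeeping: showing that, with the surplus $2$-factors available as room, the multi-switches can always be arranged so that at least $g$ of the $Q_i$ become good, and in particular pinning down the rigid configurations of odd cycles that not even a multi-switch can fuse and bounding how many of the $Q_i$ such configurations can tie up. It is precisely the inability to push that bound down to $0$ that keeps the method at $\floor{k/2}+2$ rather than at $k$.
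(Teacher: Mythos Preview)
Your proposal is not a complete proof: you explicitly flag the central step---arranging the multi-switches so that at least $\lceil k/4\rceil$ of the $Q_i$ become good---as ``the genuine obstacle'' and leave it undone. That is the heart of the matter, so as written the argument does not establish the theorem.

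More importantly, your plan makes the problem harder than it needs to be. You aim to extract \emph{two} $1$-factors from roughly half of the $2$-factors by forcing all their cycles to be even, which requires fusing each pair of odd cycles via a switch that inserts \emph{two} white cross-edges; when those edges are already present you must scavenge from surplus $Q_j$'s, and that is where the uncontrolled bookkeeping arises. The paper's route is simpler: extract \emph{one} $1$-factor from \emph{every} $2$-factor $Q_i$, which yields the same total count. The extracted $1$-factor need not lie inside $Q_i$; it uses alternating edges of $Q_i$ along each cycle together with a single \emph{black} edge (an edge of $G$ in no coloured subgraph) bridging each pair of odd cycles. If no black edge exists between two odd cycles $C_1,C_2$, one application of Lemma~\ref{lemma:multi-2} creates one: temporarily regard the edges of $C_1$ and $C_2$ as black, pick $u\in C_1$ and $v\in C_2$ with $\deg_G(u)\le\deg_G(v)$, set $x_1=uw$ on $C_1$ and $y_1=wv$, and the multi-switch makes $wv$ black while preserving the colour-degree of every vertex---so all other $Q_j$ and all previously found $1$-factors remain regular. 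No surplus $2$-factor is sacrificed and no bookkeeping accumulates across the $Q_i$; each contributes its $1$-factor independently, and the count $\floor{k/2}+2$ follows directly.
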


The new idea is that of a multi-switch.  Using the visualization of different colors representing the different $1$-factors, a multi-switch is finding several paths of length two between a pair of vertices and swapping the two colors on the edges within each path.  While switching moves with multiple colors have been used before, such as proving Vizing's Theorem (see the proof of Theorem 7.1.10 in~\cite{West96}), they can be difficult to work with.  The multi-switch, explained in more detail in Section~\ref{sec:multi}, is relatively simple and seems to be an important tool when dealing with packing edge-disjoint graphs in degree sequence problems.   With this tool in hand, we prove Theorems~\ref{thm:4-1-factors} and \ref{thm:k/2-1-factors} in Sections~\ref{sec:4-1-factors} and \ref{sec:k/2-1-factors} respectively.

\section{Multi-switch}
\label{sec:multi}

Consider a degree list $\pi$.  Given a $k$ such that $\pi$ and $\pi-k$ are graphic, suppose $\pi$ has a realization $G$ containing edge-disjoint subgraphs $A_1, A_2, \ldots, A_\ell$, where $A_i$ is a regular subgraph of degree $m_i$, and $\sum m_i = k$.   We will call $G$ a \emph{realization with regular subgraphs}.   Let $B = G - \cup A_i$, and let $W = K_n - G$.  Hence the $A_i$, $B$ and $W$ partition $K_n$.  For ease of visualization, assume edges in $B$ are colored black, edges in $W$ are colored white, and edges in $A_i$ are colored with color $c_i$.  We will call $B$, $W$, and the $A_i$ the \emph{colored subgraphs}.

The multi-switch is a generalization of a two-switch.   One situation yielding a two-switch is as follows: suppose we have a realization of a degree sequence $\pi$ with two vertices $u$ and $v$ where $\deg_G(u) \geq \deg_G(v)$. Suppose further there exists a path of length two via a third vertex $w$ such that $uw$ is a non-edge and $wv$ is an edge.   Suppose our goal is to swap edges $uw$ and $wv$, so $uw$ becomes an edge and $wv$ becomes a non-edge.  Since the degree of $u$ is at least that of $v$, there must be another vertex $z$ such that $uz$ is an edge and $zv$ is a non-edge.  Hence, if we swap $uw$ and $wv$, and simultaneously swap $uz$ and $zv$, we have made the desired change while maintaining the same degree at each vertex.  

\begin{figure}
\begin{center}
\includegraphics[scale=1]{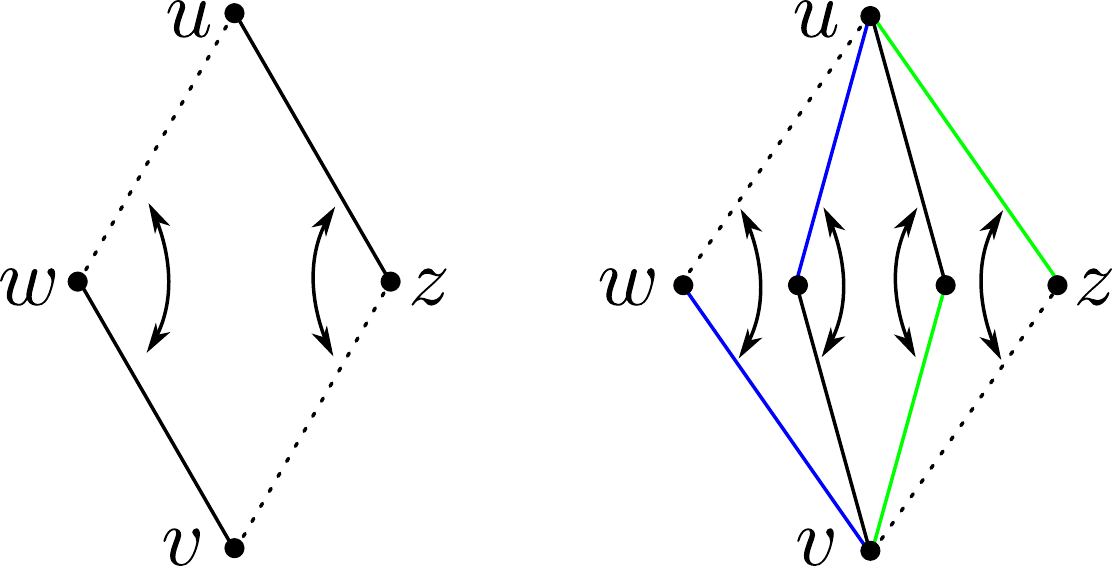}
\end{center}
\caption{\label{fig:two-multi}  A diagram showing a two-switch on the left, and a multi-switch on the right.  The dotted lines represent white edges.}
\end{figure}

The multi-switch is basically this same idea, generalized as to possibly involve more paths of length two between $u$ and $v$. See Figure~\ref{fig:two-multi}.

\begin{lemma}[Multi-switch]
\label{lemma:multi-1}
Consider a degree sequence $\pi$ and a realization with regular subgraphs $G$.   Given vertices $u, v$ with $\deg_G(u) \geq \deg_G(v)$, let $x_1$ and  $y_1$ be edges in a path of length two from $u$ to $v$.  Suppose $x_1$ is white, and $y_1$ is some other color $c$.  Let $z_1$ be a $c$-colored edge incident to $u$, and if possible let $z_2$ and $z_3$ be additional $c$-colored edges incident to $u$ and $v$ respectively.  see Figure~\ref{fig:lemma}.  

Then there is a switching move that 
\begin{itemize}
	\item swaps the colors of $x_1$ and $y_1$,
	\item swaps the colors of exactly one of $\{z_1, z_2\}$ with another edge incident to $v$,
	\item involves only  one white edge  ($x_1$) incident to $u$ and only one white edge incident to $v$,
	\item does not involve $z_3$ or any edge not incident to $u$ or $v$, and
	\item maintains the degree of every vertex in every color. 
\end{itemize}
\end{lemma}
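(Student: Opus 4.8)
The plan is to realize the desired switching move as a \emph{multi-switch}: a set of distinct vertices $p_1 = w, p_2, \ldots, p_r$, none of them $u$ or $v$, where for each $j$ we simultaneously transpose the colors of the two edges $up_j$ and $vp_j$ on the length-two path $u - p_j - v$. Write $\mathrm{col}(e)$ for the color of an edge $e$ of $K_n$ and put $a_j = \mathrm{col}(up_j)$, $b_j = \mathrm{col}(vp_j)$, using only paths with $a_j \neq b_j$. Such a move automatically preserves the degree of each $p_j$ in every color — from $up_j$ it loses an $a_j$-edge and gains a $b_j$-edge, and from $vp_j$ the reverse — and it does not touch any vertex outside $\{u,v,p_1,\ldots,p_r\}$; at $u$ the net change in color $\gamma$ is $|\{j : b_j = \gamma\}| - |\{j : a_j = \gamma\}|$, and at $v$ it is the negative of this. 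So the move preserves all degrees in all colors precisely when the multisets $\{a_1,\ldots,a_r\}$ and $\{b_1,\ldots,b_r\}$ agree. First I would record this reduction, and note that when it holds, the five bullet points will follow from how the chosen cycle of paths visits the colors $c$ and white.

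Next I would reformulate the multiset condition as a reachability statement in an auxiliary \emph{color digraph} $D$: its vertices are the colors (white, black, and the colors of the $A_i$, one of which is $c$), and for each $p \notin \{u,v\}$ with $\mathrm{col}(up) \neq \mathrm{col}(vp)$ we put an arc $\mathrm{col}(up) \to \mathrm{col}(vp)$. A family of paths with $\{a_j\} = \{b_j\}$ is the same as a balanced sub-multidigraph of $D$, i.e.\ a union of directed closed walks, and since we are forced to use $p_1 = w$, which contributes the arc $\mathrm{white} \to c$ (as $x_1 = uw$ is white and $y_1 = wv$ has color $c$), it suffices to find a \emph{simple} directed cycle of $D$ through that arc and realize its remaining arcs by distinct vertices. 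That realization is free: take a shortest directed $c$-to-white path in $D$; its colors are distinct, so its arcs are realized by distinct vertices, none equal to $w$ since each such arc has non-white tail; then prepend the arc $\mathrm{white} \to c$ via $w$.

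The crux — and the place where $\deg_G(u) \geq \deg_G(v)$ is used — is to show that \emph{white is reachable from $c$ in $D$}. Suppose not, and let $R$ be the set of colors reachable from $c$, so $c \in R$ and $\mathrm{white} \notin R$. By construction of $D$, any $p \notin \{u,v\}$ with $\mathrm{col}(up) \in R$ has $\mathrm{col}(vp) \in R$ as well, so the identity map injects $\{p \notin \{u,v\} : \mathrm{col}(up) \in R\}$ into $\{p \notin \{u,v\} : \mathrm{col}(vp) \in R\}$; moreover this injection misses $w$, which lies in the target ($\mathrm{col}(vw) = c \in R$) but not in the domain ($\mathrm{col}(uw) = \mathrm{white} \notin R$). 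Writing $d_\gamma(x)$ for the degree of $x$ in color $\gamma$ (so $d_{c_i}(x) = m_i$, $d_{\mathrm{black}}(x) = \deg_G(x) - k$, $d_{\mathrm{white}}(x) = n - 1 - \deg_G(x)$, and $\sum_\gamma d_\gamma(x) = n-1$), the injection gives $\sum_{\gamma \in R} d_\gamma(u) < \sum_{\gamma \in R} d_\gamma(v)$, hence on complements $\sum_{\gamma \notin R} d_\gamma(v) < \sum_{\gamma \notin R} d_\gamma(u)$. But the regular colors outside $R$ contribute $0$ to $d_\gamma(v) - d_\gamma(u)$, the white color contributes $\deg_G(u) - \deg_G(v) \geq 0$, and black (if outside $R$) contributes $\deg_G(v) - \deg_G(u) \leq 0$, so $\sum_{\gamma \notin R}(d_\gamma(v) - d_\gamma(u))$ equals $\deg_G(u) - \deg_G(v) \geq 0$ if black $\in R$ and equals $0$ otherwise — either way nonnegative, contradicting the strict inequality. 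I expect pinning down exactly this injection, and seeing that the white and black complement-terms cooperate with the degree hypothesis, to be the main obstacle; everything else is bookkeeping.

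To finish, I would take the simple cycle $\mathrm{white} \xrightarrow{p_1 = w} c \xrightarrow{p_2} \cdots \xrightarrow{p_r} \mathrm{white}$ and verify the bullets. The $j = 1$ transposition swaps $x_1$ and $y_1$. The cycle meets white once, so $up_1 = x_1$ is the only involved white edge at $u$ and exactly one $vp_j$ is white at $v$. The cycle leaves $c$ exactly once, i.e.\ exactly one $up_j$ has color $c$; being a $c$-edge at $u$ it is $z_1$, or else it can be renamed $z_2$ (legitimate, since then $u$ carries at least two $c$-edges), and it is transposed with the edge $vp_j$ at $v$. The cycle enters $c$ exactly once, via $w$, so the only $c$-edge touched at $v$ is $y_1 \neq z_3$, leaving $z_3$ untouched; and every changed edge is some $up_j$ or $vp_j$, hence incident to $u$ or $v$. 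Finally the arcs of a cycle satisfy $\{a_j\} = \{b_j\}$, so all degrees in all colors are preserved.
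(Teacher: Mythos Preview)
Your color-digraph reformulation is correct and is a cleaner, more structured packaging of the paper's greedy argument.  The paper builds the list $(x_1,y_1,x_2,y_2,\ldots,x_r,y_r)$ one step at a time, invoking at each step the bare inequality ``$u$ has at least as many edges of color $c'$ as $v$'' to find the next $x_i$; your reachability argument in $D$ is exactly the global version of that local step, and your counting with the sets $R$ and its complement makes the role of $\deg_G(u)\ge\deg_G(v)$ completely explicit (the paper's proof leaves the black/white bookkeeping implicit).  Bullets 1, 3, 4, 5 are verified correctly: in particular your observation that a simple cycle enters $c$ only via $w$, so $y_1$ is the sole $c$-edge touched at $v$, is the right way to see that $z_3$ is avoided.

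There is, however, a real gap in your verification of the second bullet.  The edges $z_1$ and $z_2$ are \emph{specific inputs} chosen by the user of the lemma (see the application in Section~\ref{sec:4-1-factors}, where $z_1,z_2$ are the two blue $C_1$-edges at $u_2$ among possibly many blue edges at $u_2$); you are not free to ``rename $z_2$''.  Your construction only guarantees that the unique $c$-edge $up_2$ leaving $u$ in the switch is \emph{some} $c$-edge at $u$, and when $u$ has three or more $c$-edges this need not be $z_1$ or $z_2$.  The paper handles this by \emph{forcing} $x_2=z_1$ from the outset and then, if the walk ever hits $z_3$ as some $y_j$, truncating and restarting with $x_{j+1}=z_2$.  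In your framework the analogous fix is: let $z_1=uq_1$, take the first arc out of $c$ to be $c\to\mathrm{col}(vq_1)$ via $q_1$, and rerun your reachability argument with $\gamma:=\mathrm{col}(vq_1)$ in place of $c$ (now $q_1$ plays the role of the witness $w$, since $\mathrm{col}(vq_1)=\gamma\in R'$ while $\mathrm{col}(uq_1)=c$, and if $c\in R'$ you already reach white through $c$).  The residual case $\mathrm{col}(vq_1)=c$ (so $q_1$ gives a loop at $c$) is exactly when $vq_1$ is a $c$-edge at $v$; if that edge is $z_3$ you pass to $q_2$ as the paper does, and otherwise you may still need one more step of the same kind.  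This repair is routine, but as written your argument does not deliver the second bullet.
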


\begin{figure}
\begin{center}
\includegraphics[scale=1]{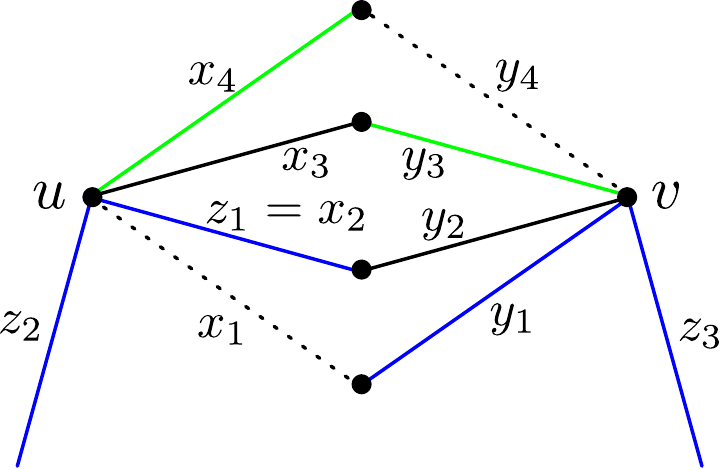}
\end{center}
\caption{\label{fig:lemma}  A diagram showing the various edge names from Lemma~\ref{lemma:multi-1}.}
\end{figure}

\begin{proof}
Set $x_2 = z_1$, and let $y_2$ be the edge (possibly white) such that the edges $x_2 y_2$ form a path from $u$ to $v$ in $K_n$.  Suppose $y_2$ is a non-white color $c'$.  Then, since $u$ has at least as many incident edges colored $c'$ as $v$, there must be some edge $x_3$ incident to $u$ of color $c'$.  Let $y_3$ be the edge such that $x_3 y_3$ is a path of length two from $u$ to $v$.  Similarly, as long as $y_3$ is non-white, there must be an $x_4$ the same color as $y_3$ incident to $u$, and we can therefore find a path $x_4 y_4$ from $u$ to $v$.  By repeating this argument, we can achieve a list $(x_1, y_1, x_2, y_2, \ldots, x_r, y_r)$, where $x_{i+1}$ has the same color as $y_i$ for all $i$.   This list can be extended until we reach a $y_r$ that is white.  At which point, by switching the colors of $x_i$ and $y_i $ for all $i$, we will maintain the degree of each color and $x_1$ and $y_1$ will be flipped, as desired.  

Note that $y_r$ is the first and only white edge on the list incident to $v$, and hence $x_1$ is the only white edge incident to $u$ on the list.

Note futher that if $z_3$, the vertex of color $c$ incident to $v$, is ever used, we can then use $z_2$ as the next $x_i$, and continue.  For example, say $z_3 = y_j$;  then $x_{j+1} = z_2$.  Then, instead of swapping $x_i$ and $y_i$ for all $i$, we can swap $x_i$ and $y_i$ for $i = 1$ and $i \geq j+1$.   If $z_3$ is never used or does not exist, then we will avoid using $z_2$.   In this way, the requirements regarding exactly one of $\{z_1, z_2\}$ being used and $z_3$ never being used are satisfied.
\end{proof}

Note that we can also achieve the same result using a black edge instead of a white edges if the degree inequality is reversed.  The proof is symmetric and therefore omitted.

\begin{lemma}
\label{lemma:multi-2}
Lemma~\ref{lemma:multi-1} is also true in the case where $\deg_G(u) \leq \deg_G(v)$ and $x_1$ is black.
\end{lemma}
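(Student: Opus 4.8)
The plan is to deduce Lemma~\ref{lemma:multi-2} from Lemma~\ref{lemma:multi-1} by a black--white complementation argument; this is the rigorous form of the symmetry remarked on just after the proof of Lemma~\ref{lemma:multi-1}. Start from the realization with regular subgraphs $G$, so that $K_n$ is partitioned into the white graph $W = K_n - G$, the black graph $B = G - \bigcup_i A_i$, and the regular subgraphs $A_1,\dots,A_\ell$. Pass to $G' = W \cup A_1 \cup \cdots \cup A_\ell$: this keeps every $A_i$ unchanged but trades the leftover graph $B$ for the leftover graph $W$. Since $W$ is edge-disjoint from each $A_i$ and the $A_i$ are pairwise edge-disjoint, $G'$ is a simple graph, and it is again a realization with regular subgraphs $A_1,\dots,A_\ell$ --- now its leftover (``black'') graph is $W$ and its non-edge (``white'') graph is exactly $B$. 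A vertex $v$ has $\deg_{G'}(v) = (n - 1 - \deg_G(v)) + k$, so the hypothesis $\deg_G(u) \le \deg_G(v)$ of Lemma~\ref{lemma:multi-2} is equivalent to $\deg_{G'}(u) \ge \deg_{G'}(v)$.

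Now apply Lemma~\ref{lemma:multi-1} to $G'$ with the same vertices $u,v$ and the same edges $x_1, y_1, z_1, z_2, z_3$. Its hypotheses all hold: $\deg_{G'}(u) \ge \deg_{G'}(v)$; the edge $x_1$, black in $G$, is a non-edge of $G'$ and hence white relative to $G'$; the assertion that $x_1 y_1$ is a path of length two from $u$ to $v$ is a statement about $K_n$ and is unaffected; and $y_1, z_1, z_2, z_3$ keep their $A_i$-colors, which the passage to $G'$ leaves untouched. Lemma~\ref{lemma:multi-1} therefore produces a switching move with its five stated properties, read relative to the coloring of $G'$. Translating back through the dictionary (white of $G'$) $=$ (black of $G$) and (black of $G'$) $=$ (white of $G$), with the $A_i$-colors fixed, this is precisely the conclusion of Lemma~\ref{lemma:multi-1} with every occurrence of ``white'' replaced by ``black'' --- that is, the statement of Lemma~\ref{lemma:multi-2}.

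The only point that needs checking --- and the single place the argument could fail --- is that the Lemma~\ref{lemma:multi-1} move applied to $G'$ does not disturb the \emph{white} graph of the original $G$, so that every white degree of $G$ is preserved and not merely the black and $A_i$ degrees. This is immediate from the structure of the proof of Lemma~\ref{lemma:multi-1}: the edge list $(x_1, y_1, \dots, x_r, y_r)$ it constructs uses only the color white (that of $x_1$ and $y_r$) together with the colors $c_1,\dots,c_\ell$ of the $A_i$, and never involves a black edge; under our dictionary the black edges of $G'$ are exactly the white edges of $G$, so those edges stay fixed throughout. Equivalently, and perhaps more transparently, one may simply transcribe the proof of Lemma~\ref{lemma:multi-1} word for word, interchanging ``black'' with ``white'' and ``$\ge$'' with ``$\le$''; the argument is entirely symmetric, and no new idea is required.
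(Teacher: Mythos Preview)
Your complementation argument is correct and is precisely the symmetry the paper invokes (the paper's entire proof is the sentence ``the proof is symmetric and therefore omitted''); passing to $G' = W\cup A_1\cup\cdots\cup A_\ell$ and applying Lemma~\ref{lemma:multi-1} is the clean way to make that symmetry rigorous.

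One small correction to your final paragraph: the assertion that the edge list $(x_1,y_1,\dots,x_r,y_r)$ in the proof of Lemma~\ref{lemma:multi-1} ``never involves a black edge'' is not right. In that proof $y_i$ is allowed to be any non-white color, black included, and the key inequality ``$u$ has at least as many incident edges colored $c'$ as $v$'' holds for $c'=\text{black}$ because $\deg_B(u)=\deg_G(u)-k\ge\deg_G(v)-k=\deg_B(v)$. Fortunately this does not damage your argument at all: the fifth bullet of Lemma~\ref{lemma:multi-1} already guarantees that degrees are preserved in \emph{every} color, and ``every color'' for $G'$ includes its black color, which is exactly the white color of $G$. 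So the extra check you flag as ``the single place the argument could fail'' is in fact unnecessary, and the proof stands without it. Your closing remark---that one may simply transcribe the proof of Lemma~\ref{lemma:multi-1} with ``black'' and ``white'' (and $\ge$, $\le$) interchanged---is the safest and most direct route.
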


Also note that the existence of $z_2$ and $z_3$ in the statement of Lemma~\ref{lemma:multi-1} is to ensure that not too many edges swap colors in certain circumstances.  If no such $z_2$ or $z_3$ exist, the conclusion of the lemmas still hold with reference to $z_2$ and $z_3$ omitted.

\section{Matchings in regular graphs}

To prove Theorem~\ref{thm:4-1-factors}, we will also need a structural result very similar to  Lemma 3.7 of Busch et al.\ \cite{BuschEtAl12},  which was used in the proof of Theorem~\ref{thm:busch}.  They used the Edmonds-Gallai Structural Theorem.  In the interest of being more self-contained, we give an edge-switching proof instead.  Given a matching $M$ in a graph $G$, an odd cycle $C$ is \emph{fully matched} if every vertex in $C$ is matched with another vertex in $C$, except for one.

\begin{lemma}
\label{lemma:odd}
Let $G$ be a $k$-regular graph. Then there exists a maximum size matching $M$ such that every vertex not covered by $M$ is contained on a fully matched odd cycle, and any two vertices not covered by $M$ are contained in disjoint fully matched odd cycles.
\end{lemma}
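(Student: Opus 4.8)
The plan is to start from an arbitrary maximum matching $M$ in the $k$-regular graph $G$ and repair it by a sequence of edge-switching moves along augmenting-type alternating walks, making sure each move does not decrease $|M|$ while moving us closer to the desired structure. Let $U$ be the set of $M$-exposed (uncovered) vertices. By maximality of $M$, there is no $M$-augmenting path, so for any $u \in U$ the set of vertices reachable from $u$ by an $M$-alternating path starting with a non-matching edge contains no other vertex of $U$ and no exposed endpoint; the classical consequence is that each such $u$ ``blossoms'' — every neighbor of $u$ is matched, and more generally the alternating reachability structure forces odd cycles. The key observation I would isolate first is: if $v \in U$ and $vw \notin M$, then $w$ is matched, say to $w'$, and $w' \ne v$; following $M$-alternating paths and using $k$-regularity (so every vertex has $k \ge 1$ neighbors, and in particular the exposed vertices have plenty of neighbors to work with) we can find a closed alternating walk through $v$, which after standard cycle-extraction yields an odd cycle $C$ containing $v$ in which all vertices but $v$ are matched within $C$ — i.e., a fully matched odd cycle. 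If $v$ is already on such a cycle, do nothing; otherwise perform switches along the alternating walk to rotate the matching until $v$ sits on a fully matched odd cycle, which does not change $|M|$ because we only ever swap matching and non-matching edges along an alternating structure with both ends effectively ``closed.''

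The second half is the disjointness requirement. After ensuring every exposed vertex lies on a fully matched odd cycle, I would process the exposed vertices one at a time, maintaining the invariant that the already-processed ones sit on pairwise disjoint fully matched odd cycles. When I come to a new exposed vertex $v$ on a fully matched odd cycle $C$, if $C$ is disjoint from all previously fixed cycles we are done with $v$; if $C$ shares a vertex with a previously fixed cycle $C'$ (which contains some exposed $u$), I use the overlap to build a new alternating structure and apply switches that ``slide'' $v$'s cycle off of $C'$ — concretely, walking along $C$ from the shared vertex until leaving $C'$, then continuing by $M$-alternating steps until closing up again into an odd cycle, and switching along it. Because $C'$ was fully matched with only $u$ exposed, and the rotation is confined to vertices reachable without disturbing $u$, this keeps $u$ on its fully matched cycle, keeps $|M|$ fixed, and strictly decreases the total overlap (measured, say, by the number of vertices lying on two of the relevant cycles). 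Iterating, all the cycles through exposed vertices become pairwise disjoint.

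The main obstacle I anticipate is making the ``slide off'' step genuinely well-defined and terminating: one must argue that the alternating walk used to re-route $v$'s cycle can always be closed into an \emph{odd} cycle (parity bookkeeping, using that $C$ and $C'$ are odd and the shared alternating segment has a controlled parity), that it never forces $u$ to become covered or a previously-disjoint cycle to re-overlap, and that a suitable potential function (total pairwise overlap among the cycles attached to exposed vertices, with ties broken by something like total cycle length) strictly decreases so the process halts. A clean way to handle this is to phrase everything in terms of $M \triangle C$ and $M \triangle C'$ and note these are again maximum matchings, then show the ``bad overlap'' cannot persist under a minimal-counterexample choice of $M$. I would therefore organize the write-up as: (1) a lemma that every exposed vertex can be placed on a fully matched odd cycle, proved by one round of alternating switches; (2) choose among all maximum matchings satisfying (1) one minimizing the overlap potential; (3) show overlap zero, else exhibit a switch contradicting minimality. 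Step (3) is where the real work lies.
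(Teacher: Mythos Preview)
Your proposal has the right overall shape but misplaces the difficulty and omits the one step that genuinely uses $k$-regularity.

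The disjointness you flag as ``where the real work lies'' is in fact a one-line consequence of maximality. If two exposed vertices $u,u'$ sat on fully matched odd cycles $C,C'$ sharing a vertex, then following $C$ from $u$ to the shared vertex and then $C'$ onward to $u'$ (choosing directions so the $M$/non-$M$ alternation matches up, which is always possible since each odd cycle offers both parities from any interior vertex) yields an $M$-augmenting path, contradicting maximality of $M$. No potential function, no ``sliding,'' no iteration is needed; the same observation shows that the alternating switches used to place one exposed vertex on its cycle cannot disturb the cycles already built for earlier exposed vertices, since any intersection would again produce an augmenting path.

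By contrast, the step you treat as routine --- producing the odd cycle in the first place --- is where $k$-regularity actually enters, and your sketch does not supply the argument. Saying the exposed vertices ``have plenty of neighbors to work with'' is not enough: in an arbitrary graph an exposed vertex need not lie on, or even be alternating-reachable to, any odd cycle. The paper's argument is a count. Let $D$ be the set of endpoints of even-length $M$-alternating paths from $v$, and let $N = N(D)$. Every vertex of $N$ is matched into $D$ (else $D$ could be enlarged), so $|N| < |D|$ since $D$ also contains the exposed vertex $v$. If $D$ were independent, all $k|D|$ edges incident to $D$ would land in $N$, but $N$ can absorb at most $k|N| < k|D|$ of them. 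Hence some non-matching edge $xy$ lies inside $D$; the alternating $v$--$x$ and $v$--$y$ paths together with $xy$ close up into the desired fully matched odd cycle at their last common vertex $z$, and one switches along the $v$--$z$ subpath to move the exposed vertex to $z$. Note in particular that the cycle need not contain $v$ itself, contrary to what your sketch asserts; insisting on a cycle literally through $v$ is an unnecessary and possibly unachievable constraint.
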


\begin{proof}
Let $M$ be a maximum matching of $G$, and let $v$ be a vertex uncovered by the matching.  Given any even length path $P$ that starts with $v$ and alternates edges not in $M$ with edges in $M$, we can toggle all the edges in $P$ from in $M$ to not in $M$, and vice versa, to move the uncovered vertex from $v$ to the last vertex of $P$. 

Let $D$ be the set of all possible locations for the uncovered vertex by such switching moves starting at $v$.    Let $N$ be all the neighbors of vertices in $D$.   Every vertex in $N$ must be matched with a vertex in $D$, or otherwise we could increase the size of $D$.    Notice that $|D| > |N|$, since every vertex in $N$ is matched with a vertex in $D$, and $D$ also has $v$.  Suppose $D$ is an independent set.  All of $D$'s neighbors are then in $N$, and therefore $D$ has $k|D|$ edges leaving it, but there is no way for $N$ to absorb all of these edges.  Hence, $D$ contains a non-matched edge $e$ connecting vertices $x$ and $y$.  

Suppose $P_1$ is the alternating path that ends at $x$, and $P_2$ is the alternating path that ends at $y$, and let $z$ be the last vertex $P_1$ and $P_2$ have in common.  This creates an odd cycle $C$ going from $x$ to $z$ along $P_1$, then from $z$ to $y$ along $P_2$, and finally going from $y$ back to $x$ using $e$.  $C$ is then a fully matched cycle with $z$ the only vertex not matched with another vertex on $C$.  By switching edges along the path $P_1$ from $v$ to $z$, we can change the uncovered vertex from $v$ to a vertex $z$, and then the uncovered vertex will be on a fully matched odd cycle.  See Figure~\ref{fig:path-switch}.  Note if any such path $P_1$ intersects with a fully matched odd cycle of another unmatched vertex $u$, then we could extend $P$ to create an alternating path between two unmatched vertices, contradicting the maximality of $M$.  Hence none of the other fully match odd cycles belonging to other unmatched vertices are disturbed.

\begin{figure}
\begin{center}
\includegraphics[scale=1]{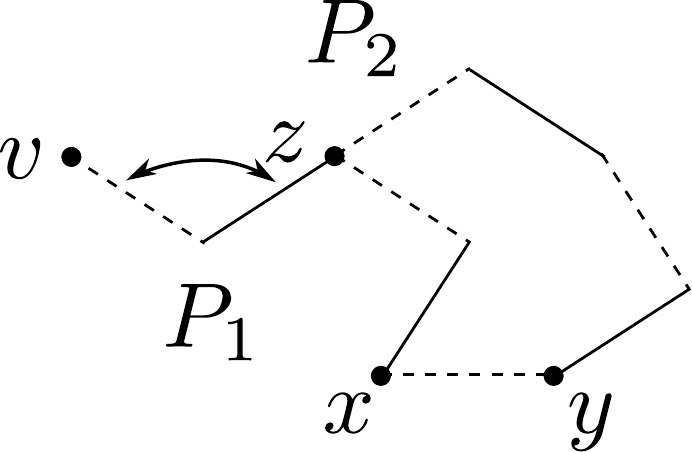}
\end{center}
\caption{\label{fig:path-switch} This shows an example of the paths $P_1$ and $P_2$ from the proof of Lemma~\ref{lemma:odd}, where $P_1$ is the entire path from $v$ to $x$, and $P_2$ is the entire path from $v$ to $y$.  Here, the solid edges are those in the matching $M$ while the dotted edges are those not in the matching.  By swapping the edges along the path from $v$ to $z$, we can move the unmatched vertex to a fully matched odd cycle.}
\end{figure}

Finally, note that for any two vertices uncovered by $M$, their fully matched odd cycles must be disjoint.  Otherwise, there would be a alternating path from one to the other, and we could increase the size of the matching.
\end{proof}

\section{Proof of Theorem~\ref{thm:4-1-factors}}
\label{sec:4-1-factors}

Consider a realization of $\pi$ containing a $(k-i)$-factor and $i$ edge-disjoint $1$-factors.  Suppose we choose $\pi$ to first  maximize $i$, and then to maximize the number of edges in a matching of the $(k-i)$-factor.   Suppose, by way of contradiction, that $i \leq 3$.  

For ease of discussion, supoose the $(k-i)$-factor has blue edges, edges not colored blue or contained in a $1$-factor in $\pi$ are black, and the rest of the edges of $K_n$ not included in the realization of $\pi$ are white.  

By Lemma~\ref{lemma:odd}, there exists a maximum size matching $M$ in the blue graph such that two vertices missed by $M$ are contained on fully matched odd cycles $C_1$ and $C_2$.  There cannot be any blue edges between $C_1$ and $C_2$, since otherwise we could extend the matching $M$ to include these missed vertices.

Since $C_1$ is an odd cycle, we must have three consecutive vertices $u_1, u_2, u_3$ along $C_1$ such that $\deg(u_1) \geq \deg(u_2) \geq \deg(u_3)$.  Similarly, we have three consecutive vertices $v_1, v_2, v_3$ along $C_2$ such that $\deg(v_1) \geq \deg(v_2) \geq \deg(v_3)$.  Without loss of generality, assume $\deg(u_2) \geq \deg(v_2)$.  Then we have $\deg(u_1) \geq \deg(u_2) \geq \deg(v_2) \geq \deg(v_3)$.  

Let $e_1 = u_1 v_2$, $e_2 = u_1 v_3$, $e_3 = u_2 v_2$ and $e_4 = u_2 v_3$.   If any of $\{e_1, e_2, e_3, e_4\}$ are black or white edges, then we can perform multi-switches that will allow us to extend $M$.  For example, suppose $e_3$ is a white edge.  Then apply Lemma~\ref{lemma:multi-1} using $u = u_2$, $v = v_3$, $x_1 = e_3$, $y_1 = v_2 v_3$, $z_1 = u_1 u_2$, and $z_2$ and $z_3$ being the other incident blue edge of $u$ along $C_1$ and $v$ along $C_2$ respectively.  After applying the lemma, the multi-switch will swap either $z_1$ and $y_1$, or it will swap $z_2$ and $y_1$, and no other edges of $C_1$ or $C_2$ were affected. Thus, we can now extend $M$ to cover $C_1$ and $C_2$ completely by using $y_1$ and then alternating edges around both cycles.

Hence, none of these four edges are black or white, so they are all contained in $1$-factors.  Therefore, there are two edges in the same $1$-factor.  These edges must be parallel:  either $e_1$ and $e_4$ or $e_2$ and $e_3$.  Either way, a simple two-switch will combine the cycles $C_1$ and $C_2$ into a large even cycle, and we can increase the size of the matching by alternating edges around this combined cycle.   By contradiction, we have $i \geq 4$.  \qed

\section{Proof of Theorem~\ref{thm:k/2-1-factors}}
\label{sec:k/2-1-factors}

By Theorem~\ref{thm:4-1-factors}, there exists a realization of $\pi$ containing a $(k-4)$-factor and four edge-disjoint $1$-factors.    If $k$ is even, we will partition the $(k-4)$-factor into $\frac{k}{2} - 2$ two-factors using Petersen's two-factor theorem.  If $k$ is odd, we will take three $1$-factors and partition the remaining $(k-3)$-factor into $\frac{k-1}{2} - 1$ two-factors using Petersen's theorem.  This will be our initial setup.  During the course of the proof, we will at  any given stage have a realization of $\pi$ containing several edge-disjoint $1$-factors and $2$-factors.  Our goal will be to repeatedly change one of the $2$-factors into a $1$-factor.

Let us focus on a particular $2$-factor, $F$.   Let $M$ be a matching that covers as many vertices as possible of the graph, using edges from $F$ and black edges that go between distinct odd cycles of $F$.  If $M$ covers all the vertices of the graph, then we remove $F$ from our list of $2$-factors, and add $M$ to our list of $1$ factors, and move on to the next $2$-factor.

Suppose $M$ does not cover all the vertices of the graph.  Any even length cycles of $F$ can be easily covered by $M$ using edges in $F$, so this leaves the odd cycles.  Given a pair of odd cycles, if there is a black edge between the odd cycles, then if we use this black edge in the matching, we can alternate around the two odd cycles to cover both completely.    Therefore, suppose we have a pair of odd cycles $C_1$ and $C_2$ with no black edges between them.

We will think of all the edges of $C_1$ and $C_2$ as being black for the time being.   Without loss of generality, assume there exists a vertex $u \in C_1$ and $v \in C_2$ with $\deg(u) \leq \deg(v)$.  Let $w$ be a neighbor of $u$ along $C_1$.   Apply Lemma~\ref{lemma:multi-2} with $u = u$, $v = v$, $x_1 = uw$, $y_1 = wv$, $z_1$ as a neighbor of $u$ the same color as $y_1$, and we need not set $z_2$ or $z_3$ specifically.   After this switch, $uw$ will no longer be a black edge, but $wv$ will be a black edge.  We can now increase $M$ so that it covers $C_1$ and $C_2$ using $wv$ which is now black, and then alternate using black edges around $C_1$ and $C_2$.  Note that none of the edges in $C_1$ and $C_2$ needed to extend the matching were affected, since at most two black edges are used in Lemma~\ref{lemma:multi-2}, and these are incident to $w$ and $v$.   Further note that the use of Lemma~\ref{lemma:multi-2} does not affect any other cycles besides $C_1$ and $C_2$ in $F$, since all the affected edges are incident to $u$ and $w$ and hence are not contained in other cycles in $F$.

Repeating with all pairs of  odd cycles in $F$, this gives an additional $1$-factor in the graph for our list.   Repeating this argument with all  of the $2$-factors gives a total of $\frac{k}{2} - 2$ one-factors if $k$ is even, or $\frac{k-1}{2} - 1$ one-factors if $k$ is odd.   Adding in the four or three $1$-factors we removed in the beginning gives a total of $\floor{\frac{k}{2}} + 2$ one-factors. \qed

\bibliography{potential}
\bibliographystyle{plain}

\end{document}